\newcommand{\AT}{AT}
\newcommand{\ch}{ch}
\newcommand{\col}{col}
\theoremstyle{plain}
\newtheorem{thm}{Theorem}[section]
\newtheorem{lem}[thm]{Lemma}
\newtheorem{prop}[thm]{Proposition}
\theoremstyle{definition}
\newtheorem{defn}[thm]{Definition}
\newtheorem{fact}[thm]{Fact}
\theoremstyle{remark}
\newtheorem*{rem}{Remark}
\begin{document}

\title{Schnyder woods and Alon-Tarsi number of planar graphs}
\author{Jakub Kozik and Bartosz Podkanowicz}
\thanks{The author / co-author of the publication received an incentive scholarship
from the funds of the program Excellence Initiative - Research University
at the Jagiellonian University in Kraków.}
\address{Theoretical Computer Science Department, Faculty of Mathematics and Computer Science, Jagiellonian University, Krak\'{o}w, Poland}
\email{jakub.kozik@uj.edu.pl, bartosz.podkanowicz@doctoral.uj.edu.pl}
\maketitle

\begin{abstract}
Thomassen in 1994 published a famous proof of the fact that the choosability of a planar graph is at most 5. 
Zhu in 2019 generalized this result by showing that the same bound holds for Alon-Tarsi numbers of planar graphs.
We present an alternative proof of that fact, derived from the results on decompositions of planar graphs into trees known as Schnyder woods.
It turns out that Thomassen's technique and our proof based on Schnyder woods have a lot in common.
We discuss and explain the prominent role that counterclockwise 3-orientations play in proofs based on both these approaches.
\end{abstract}

\section{Introduction}
Thomassen proved in \cite{thom5c} that the choice number of a planar graph is at most 5. 
This result is best possible as there exist planar graphs with choice number 5.
The first such examples have been constructed by Voigt \cite{Voigt93}.
The proof technique introduced by Thomassen in \cite{thom5c} 
    has been used in a number of follow-up papers.
In particular, it has been generalised 
    to derive analogous results for more restrictive variants of graph colorings.
E.g. Schauz in \cite{schauz2010paintability} proved that online choice number (or paintability) of a planar graph is at most 5 as well.
We are going to discuss yet another generalization by Zhu \cite{zhuat}.
\begin{thm}[\cite{zhuat}]
\label{thm:planarAT1}
    The Alon-Tarsi number of a planar graph is at most 5.
\end{thm}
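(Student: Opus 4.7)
The plan is to reduce \cref{thm:planarAT1} to the case of planar triangulations and then build, from a Schnyder wood of such a triangulation, an orientation $D$ satisfying the Alon--Tarsi condition with maximum out-degree at most~$4$. Since the Alon--Tarsi number is monotone under adding edges, I may assume that $G$ is a planar triangulation; fix an embedding with outer face $v_1v_2v_3$.

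Next, I would invoke the existence of a Schnyder wood on $G$: a partition of the interior edges into three trees $T_1,T_2,T_3$ rooted at $v_1,v_2,v_3$, each oriented toward its root. The induced orientation of the interior edges gives every interior vertex out-degree exactly $3$, and the local Schnyder property forces it to be \emph{counterclockwise}: every directed cycle encloses at least one inner vertex with counterclockwise chirality. Complete the orientation to all of $G$ by orienting the outer triangle. A cyclic orientation keeps $\Delta^+(D)\le 3$, but a small local modification may be needed (potentially raising one out-degree to~$4$) in order to rule out an accidental cancellation in the Eulerian count.

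The heart of the proof is establishing the Alon--Tarsi condition $\mathrm{EE}(D)\neq \mathrm{OE}(D)$, where $\mathrm{EE}(D)$ and $\mathrm{OE}(D)$ denote the numbers of even and odd Eulerian subdigraphs of $D$. The counterclockwise property rigidifies the structure of directed cycles in $D$: every Eulerian subdigraph decomposes into counterclockwise cycles, placing the Eulerian subdigraphs in bijection with antichains in a natural face-poset, equivalently with elements of the distributive lattice of $3$-orientations (whose minimum element is the chosen Schnyder-wood orientation). A parity computation---tracking how a single face-flip changes the number of edges in an Eulerian subdigraph---should then produce a sign-reversing involution whose only fixed point is the empty subdigraph, yielding $|\mathrm{EE}(D)-\mathrm{OE}(D)|=1$.

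The main obstacle is exactly this parity computation, compounded by the need to handle the outer triangle carefully: its vertices lie outside the Schnyder structure and can participate in directed cycles that escape the face-flip framework. Choosing the outer orientation so that the involution actually matches even and odd Eulerian subdigraphs correctly is where the bulk of the technical work should lie, and is plausibly where the upper bound of $4$ on out-degrees becomes essential in place of $3$---matching the fact that planar graphs can attain Alon--Tarsi number exactly $5$.
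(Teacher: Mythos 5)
Your plan shares the paper's starting point (reduce to triangulations, use the unique counterclockwise internal $3$-orientation from the distributive-lattice structure of $3$-orientations), but it is missing the one idea that actually makes the argument close: the paper does not apply the Alon--Tarsi lemma to the graph polynomial $f_G$ and the plain $3$-orientation. Instead it works with an \emph{augmented} polynomial $W_{G,D}$ obtained by assigning strength~$2$ to the edges of exactly one tree of the realizer (say $T_r$) and strength~$1$ elsewhere. This is what produces the bound~$4$: every interior vertex then has augmented in-degree $1+1+2=4$. Your proposal has no comparable mechanism. You observe that something must push the degree from~$3$ to~$4$, but ``a small local modification'' at one vertex cannot do it --- it must happen at \emph{every} interior vertex, and uniformly, or else the degree bound fails somewhere. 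Moreover, if the unmodified counterclockwise $3$-orientation satisfied the Alon--Tarsi condition, you would deduce $\AT(G)\le 4$ for all planar $G$, which is false; so the parity condition must in general \emph{fail} for the object you propose to analyze.

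The second gap is the parity argument itself, which you flag as ``the main obstacle'' and leave unresolved. The paper sidesteps it entirely: with the strengthening in place, it proves there are \emph{no} nonempty Eulerian structures at all, so the even/odd comparison is trivially $1\ne 0$. The mechanism is a minimality argument on Schnyder regions: take a vertex $v$ of a putative Eulerian structure whose green region is minimal; the strength-$2$ red edges and the region containment order (\cref{eq:reg_sub}) force the only possible in-edge at $v$ to be green and the only possible out-edge to be blue, which creates a clockwise cycle, contradicting the counterclockwise property of the orientation. Your proposed sign-reversing involution via face-flips is not obviously well-defined on Eulerian subdigraphs (flipping a face need not map Eulerian subdigraphs of $D$ to Eulerian subdigraphs of $D$), and in any case it would need to be built on top of the augmented orientation, not the plain one, to have any chance of giving the right bound. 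As written, the proposal identifies the correct raw materials but omits the edge-strengthening that is the actual content of the proof.
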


The main contribution of the current paper is a derivation of the above theorem from the results on the decompositions of planar graphs into trees.
Such decompositions, called realizers, have been designed by Schnyder in \cite{schem} for the purpose of constructing succinct straight-line drawings of planar graphs.
Our proof of Theorem \ref{thm:planarAT1} is conceptually (and technically) simpler than the one by Zhu.
However, it turns out that both arguments are related.
We show that in some sense the structure that is central in our derivation is also implicitly used in the works of Thomassen and Zhu.

In Section \ref{sec:prelim} we prepare the tools to be used in the proof of Theorem \ref{thm:planarAT1}. 
We recall (and extend) basic results on the Alon-Tarsi polynomial method and on Schnyder decompositions. 
Then, in Section \ref{sec:proof} we present our proof. 
Finally, in Section \ref{sec:Thom} we discuss its relation to the original proof of Thomassen for the choice number.

\section{Preliminaries}
\label{sec:prelim}
\subsection{Alon-Tarsi method}
For a polynomial $Q$, let $\alpha(Q)$ be the minimum $k$ such that there is a monomial $m$ that occurs in $Q$ with a nonzero coefficient, for which $\deg(Q)=\deg(m)$ and the maximum degree of any single variable in $m$ is at most $k$.
Note that, for a nonzero polynomials $P$ and $R$, we have $\alpha(P) \leq \alpha(P\cdot R)$.
For graph $G= (V,E)$ with $V = \{v_1, v_2, \dots, v_n\}$, \emph{graph polynomial} $f_G$ is defined as
\[
    f_G(x_1,x_2,\dots, x_n) = \prod_{i<j \wedge \{v_i, v_j\} \in E}(x_{i} - x_{j}).
\]
This polynomial first appeared in 1974 in the work of Matiyasevich \cite{matiyasevich2007criterion}.
Its use was popularized by the paper of Alon and Tarsi \cite{alon1992colorings} (who discovered it independently).

For a vertex coloring $c$ of graph $G$, 
we can see that $f_G(c(v_1),c(v_2),\dots, c(v_n)) \neq 0$ if and only if the coloring  is proper (where the colors are interpreted as elements of some ring, say $\mathbb{Z}$).
Numerous applications of the Alon and Tarsi method inspired Jensen and Toft \cite{jensen1995graph} to define the Alon-Tarsi number of a graph. 
\begin{defn}[\cite{jensen1995graph}]
\emph{The Alon-Tarsi number of a graph $G$}, denoted by $\AT(G)$, is defined as
\[
    \AT(G) = \alpha(f_G) + 1.\
\]
\end{defn}

The authors of both seminal papers \cite{alon1992colorings,matiyasevich2007criterion} observed that  for any graph $G$
\[
    \ch(G) \leq \AT(G).
\]
Alon and Tarsi proved the above proposition by applying Combinatorial Nullstelensatz to the graph polynomial $f_G$.
Schauz \cite{schauz2010paintability} observed that this line of argument can be extended to the case of on-line choosability obtaining
\[
    \ch_{OL}(G) \leq \AT(G).
\]

The following simple observation allows us to use triangular graphs when proving upper bounds for the Alon-Tarsi number for planar graphs.
\begin{fact}
\label{fact:ATmon}
If $G$ is a subgraph of $H$, then 
$$AT(G) \leq AT(H).$$
\end{fact}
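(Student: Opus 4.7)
The plan is to leverage the monotonicity remark stated just before the definition of the Alon-Tarsi number: for nonzero polynomials $P$ and $R$, one has $\alpha(P) \leq \alpha(P \cdot R)$. So I would write $f_H$ as $f_G$ times a polynomial that accounts for the additional edges, and then appeal to this inequality.

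First, I would reduce to the case where $G$ and $H$ share the same vertex set. If $V(G) \subsetneq V(H)$, form $G'$ from $G$ by adjoining the missing vertices of $H$ as isolated vertices. Then $f_{G'}$ is literally the same polynomial as $f_G$, simply regarded as living in a larger polynomial ring: every monomial of $f_G$ corresponds to a monomial of $f_{G'}$ in which the new variables appear with exponent zero. In particular, $\deg(f_{G'}) = \deg(f_G)$ and $\alpha(f_{G'}) = \alpha(f_G)$, so $\AT(G') = \AT(G)$, and now $G'$ is a spanning subgraph of $H$.

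Assuming from here that $G$ is a spanning subgraph of $H$ with a common vertex labeling, the definition of the graph polynomial factors cleanly along the edge partition $E(H) = E(G) \sqcup (E(H) \setminus E(G))$, giving
\[
    f_H \;=\; f_G \cdot \prod_{\substack{i<j\\ \{v_i,v_j\}\in E(H)\setminus E(G)}} (x_i - x_j).
\]
Both factors are nonzero polynomials, so the quoted monotonicity of $\alpha$ yields $\alpha(f_G) \leq \alpha(f_H)$, and adding $1$ gives $\AT(G) \leq \AT(H)$.

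There is no genuine obstacle: once the monotonicity property $\alpha(P) \le \alpha(PR)$ is available, the result is immediate from a trivial factorization. The only point demanding any care at all is the mild bookkeeping in the case $V(G)\ne V(H)$, which is absorbed by the spanning-subgraph reduction above.
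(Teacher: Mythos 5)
Your proof is correct and takes essentially the same route as the paper: the paper simply notes that $f_G$ divides $f_H$ and invokes the monotonicity $\alpha(P)\le\alpha(PR)$, which is exactly your factorization argument. The only addition is your explicit bookkeeping for the case $V(G)\subsetneq V(H)$, which the paper leaves implicit.
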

\begin{proof}
Graph polynomial $f_G$ divides graph polynomial $f_H$. 
Therefore  $\alpha(f_G) \leq \alpha(f_H),$ and $AT(G) \leq AT(H).$
\end{proof}

The next lemma is also from the work of Alon and Tarsi \cite{alon1992colorings}.
A graph is called \emph{even (odd)}, depending on the parity of the number of its edges.
Recall that a directed graph $F$ is \emph{an Eulerian subgraph of an orientation $O$ of graph $G$} if $F$ is a subgraph of $O$ and for every vertex in $F$, the in-degree in $F$ is equal to the out-degree in $F$.
\begin{lem}[\cite{alon1992colorings}] \label{lem:org_even}
Consider a graph $G$ and its orientation $O$. 
Let $k$ be the maximum in-degree in this orientation. 
If the number of even Eulerian subgraphs of $O$ is different from the number of odd Eulerian subgraphs of $O$ then
\[
    \AT(G)-1 \leq k.
\]
\end{lem}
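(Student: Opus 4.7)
The plan is to expand the graph polynomial $f_G$ in a way that makes the stated orientation $O$ visible, and then identify the coefficient of a particular monomial as (up to sign) the difference $EE(O) - OE(O)$ between the number of even and odd Eulerian subgraphs of $O$.

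First I would fix the enumeration $V = \{v_1, \ldots, v_n\}$ used in the definition of $f_G$, so that every edge $e = \{v_i, v_j\}$ with $i < j$ contributes a factor $(x_i - x_j)$. Expanding the product $\prod_{e}(x_i - x_j)$ into a sum of monomials corresponds to choosing, for each edge, one of the two variables; I will encode this choice as an orientation $D$ of $G$ by declaring that if we pick $x_i$ (resp.\ $-x_j$) from the factor for $\{v_i,v_j\}$, then in $D$ that edge is oriented \emph{towards} $v_i$ (resp.\ towards $v_j$). With this convention, the contribution of $D$ to the expansion is $\mathrm{sgn}(D)\cdot \prod_v x_v^{d_D^-(v)}$, where $d_D^-$ is the in-degree in $D$ and $\mathrm{sgn}(D) = (-1)^{r(D)}$ with $r(D)$ the number of edges directed from the lower-indexed endpoint to the higher-indexed one.

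Next I would single out the monomial $m = \prod_v x_v^{d_O^-(v)}$ associated with the orientation $O$ from the hypothesis. Its total degree equals $|E(G)| = \deg(f_G)$, and its maximum single-variable degree equals the maximum in-degree $k$. The orientations $D$ contributing to this monomial are exactly those with the same in-degree sequence as $O$; the key combinatorial observation is that such a $D$ is obtained from $O$ by reversing exactly the edges of some subgraph $F$, and the equality of in-degrees at every vertex forces $F$ to be Eulerian in $O$. This gives a bijection between orientations $D$ with $d_D^- = d_O^-$ and Eulerian subgraphs of $O$. A short check of the sign change under reversing one edge shows that $\mathrm{sgn}(D)/\mathrm{sgn}(O) = (-1)^{|E(F)|}$, so the total coefficient of $m$ in $f_G$ equals $\mathrm{sgn}(O)\cdot (EE(O) - OE(O))$.

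By hypothesis $EE(O) \neq OE(O)$, so this coefficient is nonzero. Therefore $m$ witnesses that $\alpha(f_G) \leq k$, and hence $\AT(G) - 1 = \alpha(f_G) \leq k$. The step I expect to require the most care is the sign bookkeeping in the previous paragraph: verifying that reversing a single edge changes $\mathrm{sgn}(D)$ by exactly $-1$ regardless of the indices of its endpoints, and then extending this to the parity of $|E(F)|$ for an arbitrary Eulerian reversal set. Once that is nailed down, the rest is packaging.
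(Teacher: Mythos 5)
Your proof is correct, and it is essentially the same argument the paper uses. The paper does not prove Lemma \ref{lem:org_even} directly; it remarks that a generalized version (Lemma \ref{wgd_atg_relation}, for augmented orientations) is proved later, and that proof proceeds exactly as yours does in the special case where every edge has strength 1: expand the graph polynomial as a signed sum over orientations, observe that two orientations contribute to the same monomial iff they differ on an Eulerian (sub)structure, and check that the sign difference is the parity of that structure, so that the coefficient of the target monomial is $\pm(EE(O)-OE(O))$. Your sign bookkeeping (reversing a single edge $\{v_i,v_j\}$ swaps $x_i$ for $-x_j$ in the factor $(x_i-x_j)$, hence flips the sign regardless of which endpoint has the smaller index) is exactly the point the paper handles implicitly when it states that $\mathrm{sgn}(S(R_1))=\mathrm{sgn}(S(R_2))$ iff the symmetric difference has an even number of edges.
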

A generalized version of the above lemma is proved in the next section.

The coloring number of a graph $G$, denoted by $\col(G)$ is the minimum number $k$ such that there exists an acyclic orientation of the edges of $G$ for which in-degree of every vertex is at most $k-1$.
Clearly, by Lemma \ref{lem:org_even} we have
\[
    \AT(G) \leq \col(G).
\]

\subsection {Extension to augmented graphs} 
We define an extended version of the graph polynomial for graphs with augmented edges. 

\begin{defn} 
An \emph{Augmented orientation} is a tuple $(G, D, O)$ where $G$ is a graph, 
$O$ is an orientation of $G$, and $D:E\to \mathbb{N}$ assigns positive strengths to the edges of $G$.
An edge with strength $k$ will be called a \emph{$k$-edge}. 
An edge with strength 2 will be called a \emph{double edge}. 
\emph{Augmented in-degree} (resp. \emph{augmented out-degree}) is defined as the sum of strengths of the ingoing (resp. outgoing) edges.
\end{defn}

Let $(G,D,O)$ be an augmented orientation.
We define a \emph{graph polynomial} for an augmented graph as follows.
\[
    W_{G,D}(x_1,\dots,x_n) = \prod_{i<j \wedge e=\{v_i, v_j\} \in E}(x_i^{D(e)} - x_j^{D(e)}).
\]
\begin{rem}
Observe that $W_{G,D}$ is different from the polynomial $f_{G'}$ of a multigraph $G'$, constructed by replacing every $k$-edge of $(G,D,O)$ with $k$ parallel edges.
Note also that $W_{G,D}= f_G$ if every edge has strength $1$.
\end{rem}
\smallskip
\noindent
Using identity
\[
    (x_a^{D(e)} - x_b^{D(e)}) 
    = (x_a - x_b)\left(\sum_{i=0}^{D(e)-1} x_a^{D(e)-1 - i}x_b^i \right),
\]
polynomial $W_{G,D}$ can be rewritten into
\begin{align*}
    W_{G,D}(x_1,\dots,x_n) & = \left(\prod_{i<j \wedge e=\{v_i, v_j\} \in E}(x_i - x_j) \right) \cdot P(x_1,\dots,x_n)
    \\ & = f_G(x_1,\dots,x_n) \cdot P(x_1,\dots,x_n),
\end{align*}
where $P$ is a nonzero polynomial. 
From this we conclude the following fact
\[
    \AT(G)-1 = \alpha(f_G) \leq \alpha(W_{G,D}).
\]

\begin{defn}[Eulerian structure] 
A graph $F$ without isolated vertices is \emph{an Eulerian structure in augmented orientation $(G,D,O)$} if $F$ is a subgraph of $G$ and for every vertex $v$ 
its augmented in-degree is equal to its augmented out-degree in augmented orientation $(F,D_F,O_F)$, where $O_F$ is $O$ restricted to $F$ and $D_F$ is $D$ restricted to $F$.
\end{defn}
The proof of the next lemma is a straightforward extension of the proof of Alon and Tarsi \cite{alon1992colorings} to the case of graphs with augmented edges.
\begin{lem} \label{wgd_atg_relation}
Consider augmented orientation $(G,D,O)$. 
Let $k$ be the maximum augmented in-degree in this augmented orientation. 
If the number of even Eulerian structures in $(G,D,O)$ is different than the number of odd Eulerian structures in $(G,D,O)$ then
\[
    \AT(G)-1 = \alpha(f_G) \leq \alpha(W_{G,D}) \leq k.
\]
\end{lem}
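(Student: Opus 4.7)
The plan is to mimic the classical Alon--Tarsi expansion argument of Lemma~\ref{lem:org_even}, only taking into account that each factor now contributes $D(e)$ to the total degree rather than $1$. Since the chain of inequalities $\AT(G)-1=\alpha(f_G)\leq\alpha(W_{G,D})$ has already been established before the statement, it suffices to prove $\alpha(W_{G,D})\leq k$ by exhibiting a suitable monomial of top total degree in $W_{G,D}$ with nonzero coefficient and every variable exponent at most $k$.

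First I would expand
\[
    W_{G,D}(x_1,\dots,x_n)=\prod_{\substack{i<j\\ e=\{v_i,v_j\}\in E}}\bigl(x_i^{D(e)}-x_j^{D(e)}\bigr)
\]
by, for each edge $e$, choosing either the positive term $x_i^{D(e)}$ or the negative term $-x_j^{D(e)}$. Such a choice is naturally encoded as an orientation $O'$ of $G$: orient $e$ so that its head is the endpoint whose variable was selected. The resulting contribution is then
\[
    (-1)^{r(O')}\prod_{v}x_v^{d^{\,-}_{O'}(v)},
\]
where $d^{\,-}_{O'}(v)$ is the augmented in-degree of $v$ in $(G,D,O')$ and $r(O')$ counts edges whose head in $O'$ is the higher-indexed endpoint. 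The total degree of every such monomial equals $\sum_{e\in E}D(e)=\deg(W_{G,D})$.

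Next I would fix the reference orientation $O$ and collect, among all $O'$, those which produce the same multiset of augmented in-degrees as $O$. The key verification is that this set is in bijection with Eulerian structures in $(G,D,O)$: letting $F$ be the set of edges on which $O$ and $O'$ disagree, one checks that equality of augmented in-degrees at every vertex is precisely the condition that, inside $F$ with the orientation inherited from $O$, the augmented in-degree equals the augmented out-degree at every vertex of $F$ (isolated vertices of $F$ are automatically balanced and are discarded as in the definition of an Eulerian structure). Because flipping one edge changes $r$ by $\pm 1$, the signs satisfy $(-1)^{r(O')}=(-1)^{r(O)}(-1)^{|E(F)|}$, so grouping all such $O'$ yields
\[
    [\,\textstyle\prod_v x_v^{d^{\,-}_O(v)}\,]\,W_{G,D}
    =(-1)^{r(O)}\bigl(\#\text{even Eulerian structures}-\#\text{odd Eulerian structures}\bigr).
\]

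By hypothesis this coefficient is nonzero. The corresponding monomial has total degree equal to $\deg(W_{G,D})$ and every single-variable exponent equals the augmented in-degree of some vertex in $(G,D,O)$, hence at most $k$. This gives $\alpha(W_{G,D})\leq k$, completing the chain of inequalities. The only genuinely delicate step is the bijective correspondence in the middle paragraph: I would state it cleanly as a separate claim and verify the augmented-degree bookkeeping vertex by vertex, since the rest of the argument is a direct transcription of Alon and Tarsi's original calculation with $x_i$ replaced by $x_i^{D(e)}$ throughout.
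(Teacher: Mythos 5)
Your proposal is correct and follows essentially the same approach as the paper: expand $W_{G,D}$ as a signed sum over orientations, identify the orientations producing the same monomial as $O$ with Eulerian structures via the set of flipped edges, and use the sign-parity relation to read off the coefficient as a signed difference of even and odd Eulerian-structure counts. The only thing to tighten is the phrase ``same multiset of augmented in-degrees'' in your third paragraph, which should read ``same augmented in-degree at every vertex'' (i.e.\ the same monomial), as you in fact use in the sentence that follows.
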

\begin{proof}
We define function $S$ from the set of orientations of $G$ to the set of monomials that can potentially occur in $W_{G,D}$. 
Given an orientation, from every term $(x_i^D(e)-x_j^D(e))$ of the product defining $W_{G,D}$, 
 we choose $x_i^D(e)$ if the orientation directs edge $e = v_iv_j$ to $v_i$; otherwise, we choose $-x_j^D(e)$.
Then, the value of $S$ on the orientation is the product of the chosen terms. 

We can see that
\[
    W_{G,D} = \sum_{R \text{ - orientation of $G$}} S(R).
\]
The degree of $x_i$ in $S(R)$ is exactly the augmented in-degree of $v_i$ in (augmented) orientation $(G,D,R)$.
Consider two orientations $R_1, R_2$ such that $|S(R_1)| = |S(R_2)|$
(i.e. the monomials produced from $R_1$ and $R_2$ differ at most by sign). 
Let $A$ be the set of edges that are oriented differently in $R_1$ than in $R_2$. 
We can notice that $A$ induces Eulerian structure in both $(G,D,R_1)$, $(G,D,R_2)$ (because augmented in-degree of every vertex is the same in $R_1$ and $R_2$). 
Moreover, the sign of $S(R_1)$ is equal to the sign of $S(R_2)$ if and only if the number of edges in $A$ is even.
Therefore, for every Eulerian structure $B$ in $(G,D,R_1)$, when we change the orientation of the edges belonging to $B$, we get an orientation $R_3$ for which $|S(R_1)| = |S(R_3)|$. 
Let $CO(R), CE(R)$ be respectively, the number of Eulerian structures in $(G,D,R)$ with odd and even number of edges. 

The above discussion allows us to conclude that, whenever $CO(O) \not = CE(O)$, we have
\[
    \sum_{R : |S(R)| = |S(O)|}S(R) \not = 0.
\]
Since the maximum augmented in-degree of $(G,D,O)$ is at most $k$, we obtain that $\alpha(S(R)) \leq k$. 
That implies
\[
    \AT(G)-1 = \alpha(f_G) \leq \alpha(W_{G,D}) \leq k.
\]
\end{proof}

\subsection {Schnyder woods}
In this section, we recall the definitions and basic properties of Schnyder labellings.
We are going to use them in the main proof.
Considered theorems are trivial for graphs with fewer than 3 vertices. In this section, we assume that the planar graph have at least 3 vertices. 

\begin{defn}
 A \emph{triangular graph} is a plane graph whose faces are triangles.
\end{defn}
Clearly, every planar graph is a subgraph of some triangular graph on the same vertex set.
By monotonicity of the Alon-Tarsi number (Fact \ref{fact:ATmon}) it is enough to prove the upper bound for triangular graphs.
Observe also that triangular graphs are 2-connected.

\subsubsection{3-orientations and realizers}
The orientations studied within the framework of Schnyder labellings need to be reversed in order to be useful for the Alon-Tarsi method.
For the sake of consistency, we reverse the orientations when quoting results from that field.
For example, in the original version of the following definition from \cite{schem}, every vertex has \emph{outdegree} one in each of the sets and the counterclockwise order of the edges
incident to $v$ is: outgoing edge of $T_r$, incoming edges of $T_b$,
outgoing edge of $T_g$, incoming edges of $T_r$, outgoing edge of $T_b$, incoming edges of $T_g$.
\begin{defn}{\cite{schem}}
A \emph{realizer} of a triangular graph
$G$ is a tuple of three sets of oriented edges $(T_r,T_g,T_b)$ such that,
after ignoring edge orientations, these three sets form a partition of the interior edges of $G$ and 
such that for each interior vertex $v$ of $G$ it holds:
\begin{enumerate}
    \item $v$ has in-degree one in each of $T_r, T_g, T_b$.
    \item The counterclockwise order of the edges
incident to $v$ is the following: incoming edge of $T_r$, outgoing edges of $T_b$,
incoming edge of $T_g$, outgoing edges of $T_r$, incoming edge of $T_b$, outgoing edges of $T_g$.
\end{enumerate}
\end{defn}
Note that in the above definition, for any interior vertex $v$ there might be no outgoing edges in any of the sets.

Schnyder proved in \cite{schem} that every triangular graph has a realizer.
In addition to partitioning the interior edges into three sets, the realizer also orients the interior edges of $G$.
In this orientation, every interior vertex has in-degree exactly $3$.
Such orientations of the interior edges of a triangular graph are called \emph{internal $3$-orientations}.
It is easy to check that, by the Euler formula, in every such orientation, no interior edge is directed toward an exterior vertex.
De Fraysseix and de Mendez observed in \cite{FraPOM01} that an internal $3$-orientation uniquely determines a realizer.
Therefore, there is a bijection between the set of realizers and the set of internal $3$-orientations of a graph $G$. 
Both of these perspectives are going to be useful for our needs.
(Note that we are ignoring edges of the outer triangle.)
We use the following observation from \cite{schem}.
\begin{prop}[Theorem 4.5 in \cite{schem}]\label{properties_of_labelling}
Let $G$ be a triangular graph with realizer $(T_r,T_g,T_b)$. 
Then $T_r, T_g, T_b$ are trees and each of $T_r, T_g, T_b$ spans all interior vertices of $G$.
\end{prop}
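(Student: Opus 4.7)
The plan is to establish, for each color $c \in \{r,g,b\}$, three facts in order: that $|T_c| = n - 3$, that $T_c$ is acyclic, and that $T_c$ is a single connected tree containing every interior vertex. The ingredients are the in-degree condition in the definition of a realizer, the already-quoted fact that no interior edge points toward an exterior vertex, and the cyclic angular rule at each interior vertex.

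For the edge count, I would sum the $T_c$-in-degrees over all vertices to obtain $|T_c|$. Each interior vertex contributes exactly $1$ and each exterior vertex contributes $0$, so $|T_c| = n - 3$. Summed over the three colors this accounts for all $3(n-3)$ interior edges, consistent with the partition hypothesis and the Euler-formula count of interior edges of a triangulation on $n$ vertices.

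Next I would argue acyclicity. Suppose for contradiction that $T_r$ contains a directed cycle $C$, bounding an open region $R$. At each vertex $v$ on $C$, both incident $C$-edges are in $T_r$, so by the angular rule they occupy the unique incoming $T_r$-slot and one of the outgoing $T_r$-slots at $v$; in particular, between them in one angular direction sit certain outgoing $T_b$-edges and the incoming $T_g$-edge. Distinguishing whether $R$ lies to the left or to the right of the directed cycle, I would track which $T_g$- and $T_b$-edges at vertices of $C$ are forced into $R$, and compare this count with the number of $T_g$- or $T_b$-in-edges required by the interior vertices strictly inside $R$ (each supplying exactly one in-edge per color); the resulting imbalance contradicts the realizer axioms. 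This angular-planarity step is the main technical difficulty.

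Finally, acyclicity together with the in-degree structure shows that $T_c$ is a disjoint union of out-arborescences, rooted at those exterior vertices that are incident to a $T_c$-edge; each interior vertex automatically appears, being the target of its unique incoming edge, which also yields the spanning property. If $s$ denotes the number of such root exterior vertices, the forest identity (\,number of vertices minus number of edges equals number of components\,) gives exactly $s$ components. It then remains to argue $s = 1$, which I would do by a planarity argument using the angular rule at the interior vertices adjacent to each exterior corner: the rule distinguishes a unique exterior corner per color toward which the $T_c$-arborescence can extend, forcing $T_c$ to be a single tree spanning all interior vertices.
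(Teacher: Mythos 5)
The paper does not prove this proposition; it is quoted directly from Schnyder \cite{schem} (Theorem 4.5 there), so there is no in-paper argument to compare your proposal against.

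As a standalone attempt, your outline follows the natural route (the one Schnyder essentially takes): count edges via the in-degree condition, rule out directed cycles, and then use the acyclicity plus the in-degree structure to conclude that each $T_c$ is a union of out-arborescences whose roots are exterior vertices, finally pinning the number of roots to one. The arithmetic is fine: since every $T_c$-edge has its head at an interior vertex (this uses the stated fact that no interior edge is directed toward an exterior vertex), $|T_c|$ equals the number of interior vertices, $n-3$, and summing over colors recovers $3n-9$, the count of interior edges of a triangulation.

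The gap is that the two steps that actually carry the content --- acyclicity of each $T_c$, and the fact that the forest has exactly one root --- are only gestured at. For acyclicity you announce that tracking which $T_g$- and $T_b$-edges at vertices of a hypothetical directed cycle $C$ point into the bounded region and comparing with the color-in-degree requirements of the strictly interior vertices yields an ``imbalance,'' and you yourself flag this as the main technical difficulty; but the contradiction is never actually produced, and the bookkeeping here (counting angular sectors around $C$, distinguishing the orientation of $C$, and invoking Euler's formula on the region) is exactly where Schnyder's proof does its work. Likewise the final step, showing $s=1$, is left as a planarity assertion about ``a unique exterior corner per color'' without identifying which corner or why no second exterior vertex can serve as a root. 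Note also that, in the convention used in this paper, acyclicity of each single $T_c$ follows immediately from the later Proposition \ref{prop:no_cycle_in_2} (Schnyder's Theorem 4.6, that the directed graph $T_r\cup T_b$ is acyclic); but since Schnyder proves Theorem 4.5 before 4.6, if you want your proposal to stand on its own you do need to fill in the discharging argument rather than outline it.
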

Every large enough triangular graph admits a number of different realizers.
Natural operation of inverting the edges of a directed triangle in an internal 3-orientation allows to transform one internal 3-orientation into another. 
Starting from this notion, Brehm \cite{brehm_orie} studied the graph of orientations and discovered that it is naturally organized into a structure of a distributive lattice.
The top and bottom elements of that lattice are the unique orientation without respectively clockwise and counter-clockwise cycles.
\begin{prop}[Theorem 1.3.3 and Lemma 1.7.7 in \cite{brehm_orie}]
\label{clockwise}
For every triangular graph, there exists exactly one internal 3-orientation in which all directed cycles are oriented counterclockwise.
\end{prop}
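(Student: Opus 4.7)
The plan is to prove existence and uniqueness separately, with uniqueness handled by a symmetric-difference argument and existence by iteratively flipping clockwise triangular faces.

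For uniqueness, I take two internal 3-orientations $O_1,O_2$ of $G$ satisfying the hypothesis and examine the set $D$ of interior edges oriented differently. At each interior vertex both orientations give in-degree exactly $3$, so the number of $D$-edges coming in under $O_1$ equals the number going out; that is, $(D,O_1)$ is an Eulerian directed subgraph. If $D\neq\emptyset$, it contains a directed cycle in $O_1$, counterclockwise by hypothesis. The same edge set traversed in the opposite order is a directed cycle in $O_2$, and reversing a counterclockwise cycle produces a clockwise one, contradicting the hypothesis on $O_2$. Hence $D=\emptyset$ and $O_1=O_2$.

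For existence, I start from any internal 3-orientation (which exists by Schnyder's theorem) and repeatedly reverse all three edges of a clockwise-oriented interior triangular face. Such a flip exchanges one in-edge for one out-edge at each of the three vertices, preserving in-degree $3$ and hence the 3-orientation property. Two facts must be verified: \textbf{(i)} whenever a clockwise directed cycle exists, some clockwise facial triangle exists; and \textbf{(ii)} the flipping process terminates. For (i), I take a clockwise cycle enclosing minimum area and argue that if its length exceeded $3$, then either a chord of that cycle or an interior vertex inside it (where the in-degree-$3$ condition severely constrains the orientation) must yield a strictly shorter clockwise cycle, contradicting minimality. For (ii), I use a monovariant: the sum over clockwise directed cycles of the number of enclosed faces. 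A flip of a clockwise triangle $T$ removes $T$ from this sum, and any newly created clockwise cycle is trapped inside some previously existing clockwise cycle through $T$, so the monovariant strictly decreases.

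The main obstacle I expect is step (i): turning an innermost clockwise cycle of length greater than $3$ into a strictly shorter clockwise cycle requires a careful combinatorial argument using the rigid cyclic arrangement of in-edges and out-edges forced by the 3-orientation at each interior vertex, together with planarity of the triangulation. Once (i) is in hand, (ii) is essentially bookkeeping and uniqueness is routine.
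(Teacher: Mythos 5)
The paper does not prove this proposition; it is quoted from Brehm's thesis (Theorem~1.3.3 and Lemma~1.7.7 there), so there is no ``paper proof'' to compare against. Your uniqueness argument is correct and complete: the symmetric difference of two internal 3-orientations is indeed Eulerian (the in-degree of every interior vertex is $3$ in both, and by the Euler-formula count no interior edge points to an outer vertex in either, so the difference set has balanced in/out-degree everywhere), and a directed cycle in it would be simultaneously counterclockwise in $O_1$ and clockwise in $O_2$.

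The existence half, however, has a genuine gap that you yourself flag. Your step~(i) --- every internal 3-orientation containing a clockwise directed cycle also contains a clockwise facial triangle --- is exactly Brehm's Corollary~1.5.2, which this paper also cites as an independent black box in Section~4. Your sketch (``take an area-minimal clockwise cycle; a chord or an interior vertex yields a shorter one'') is the right starting point, and the chord case does go through, but the no-chord case requires a real argument exploiting the in-degree-$3$ condition at interior vertices together with an Euler-formula count, and you explicitly say you have not done it. This is the crux of the whole proposition; without it the flip process cannot even be started in general, so the proof is not complete. Your step~(ii) is also not sound as written: the quantity ``sum over clockwise directed cycles of the number of enclosed faces'' is not obviously a monovariant. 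After flipping a clockwise facial triangle $T$, a new clockwise cycle can certainly arise, and your claim that it is ``trapped inside some previously existing clockwise cycle through $T$'' is asserted without justification and is not clearly true --- the new cycle could in principle extend outside every clockwise cycle that previously passed through $T$. A standard way to get termination is a face potential of the kind used by Brehm and Felsner in the distributive-lattice framework (e.g.\ a per-face counter that strictly decreases on exactly the flipped face and changes nowhere else); I would recommend adopting one of those potentials rather than patching the cycle-based one. So: uniqueness is fine, the high-level plan for existence matches Brehm's, but both (i) and (ii) currently lack proofs.
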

The counterclockwise orientation from the previous proposition plays a central role in our proof.

\subsubsection{Warm-up example}
We illustrate how the basic results on the realizers of planar graphs can be used to derive an alternative proof of the following strengthening of one of the results from \cite{kim2019alon}. 
(The actual bound from \cite{kim2019alon} was for the Alon-Tarsi number.)
\begin{thm}[\cite{kim2019alon}]
\label{thm:G-F}
For every planar graph $G$ there exists a forest $F$ such that 
\[
    \col(G-F) \leq 3.
\]
\end{thm}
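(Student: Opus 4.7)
The plan is to reduce to triangulations and then exploit a Schnyder realizer. If $G' \supseteq G$ is a triangulation on the vertex set $V(G)$ and $F' \subseteq E(G')$ is a forest with $\col(G' - F') \leq 3$, then $F := F' \cap E(G)$ is a forest in $G$ (being a subgraph of $F'$) and $G - F \subseteq G' - F'$, so $\col(G - F) \leq 3$ by monotonicity of the coloring number under subgraphs. Hence we may assume $G$ is triangular.

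Let $(T_r, T_g, T_b)$ be the realizer associated with the counterclockwise internal 3-orientation of $G$ provided by Proposition~\ref{clockwise}. By Proposition~\ref{properties_of_labelling} each $T_i$ is a tree, so $F := T_r$ is a forest. It remains to show that $\col(G - F) \leq 3$, which I would do by exhibiting an acyclic orientation of $G - F$ with maximum in-degree at most $2$.

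The natural candidate is to keep the realizer orientations on $T_g \cup T_b$ and orient the three outer-triangle edges as a transitive tournament. Every interior vertex then has in-degree exactly $2$ (one edge from each of $T_g$ and $T_b$), and every outer vertex has in-degree at most $2$ (from the outer triangle alone, since no interior edge is directed toward an outer vertex). Moreover, no directed cycle can use an outer-triangle edge: outer vertices have only outer-triangle in-edges, and those three edges form an acyclic tournament. Hence any hypothetical directed cycle lies entirely inside $T_g \cup T_b$.

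The main obstacle is ruling out such cycles. The required structural claim is that, in the counterclockwise internal 3-orientation, every directed cycle uses at least one edge of $T_r$. For a face triangle this is immediate from the cyclic-order axiom in the definition of a realizer: a counterclockwise face triangle contains exactly one edge from each of $T_r$, $T_g$, $T_b$. A general directed cycle is counterclockwise by Proposition~\ref{clockwise} and therefore bounds a disk tiled by face triangles; the claim should extend to such a cycle by induction on the enclosed area, flipping boundary face triangles one at a time and tracking how many $T_r$-edges lie on the cycle. Alternatively, one may argue via a Schnyder region-size potential on the interior vertices---a quantity of the form $n_g(v) + n_b(v)$ (counting the sizes of two of the three regions determined by $v$) that strictly decreases along every edge of $T_g \cup T_b$. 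Formalizing this last inductive or potential argument is where I expect the most care to be required.
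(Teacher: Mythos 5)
Your overall strategy matches the paper's: reduce to triangulations, take a Schnyder realizer $(T_r,T_g,T_b)$, remove one color class as the forest $F$, and show that the remaining realizer orientation (plus a suitable orientation of the outer triangle) is acyclic with in-degrees at most $2$. Your reduction to triangulations, the in-degree count, and the observation that no cycle can pass through an outer vertex are all correct.

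The gap is precisely where you flag it: you leave the acyclicity of $T_g\cup T_b$ as an unfinished claim, proposing either an induction on enclosed area via face flips or a region-size potential, and you acknowledge that ``formalizing this \dots is where I expect the most care to be required.'' But this acyclicity is exactly the content of Proposition~\ref{prop:no_cycle_in_2} (Schnyder's Theorem~4.6), which is already stated in the preliminaries of the paper and which the paper's proof cites directly. You apparently overlooked it. Because the result holds for \emph{every} realizer, your restriction to the realizer coming from the counterclockwise internal 3-orientation (Proposition~\ref{clockwise}) is also unnecessary; it adds a dependency on a harder structural result without simplifying anything. Once you invoke Proposition~\ref{prop:no_cycle_in_2}, the rest of your argument goes through and coincides with the paper's proof up to relabeling colors and a minor difference in how you phrase the orientation of the outer-cycle edges (you fix a transitive tournament; the paper just observes that some acyclic extension exists).
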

We are going to use the following version of Theorem 4.6 from \cite{schem}.
\begin{prop}\label{prop:no_cycle_in_2}
Consider a triangular graph $G = (V,E)$ with realizer $(T_r,T_g,T_b)$. 
Then, the oriented graph $(V, T_r \cup T_b)$ is acyclic.
\end{prop}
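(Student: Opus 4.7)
I argue by contradiction. Suppose $(V, T_r \cup T_b)$ contains a directed cycle, and let $C$ be one enclosing the fewest interior faces of $G$ in the bounded region $R$ it determines (the side of $C$ not containing the outer face). Walking along $C$ in its own direction, $R$ lies either to the left or to the right of the walker; I treat the case ``$R$ on the left'' (i.e.\ $C$ is oriented counter-clockwise around $R$), and note that the opposite case is handled by the analogous argument with the roles of $T_r$ and $T_b$ (and of in-/out-edges) swapped.

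Since $T_b$ is a tree (Proposition~\ref{properties_of_labelling}), $C$ cannot be composed entirely of $T_b$-edges; hence some vertex $v \in C$ has its $C$-incoming edge equal to $v$'s in-$T_r$ edge. Using the realizer's counter-clockwise rotation at $v$, whose edges are cyclically ordered as in-$T_r$, out-$T_b$, in-$T_g$, out-$T_r$, in-$T_b$, out-$T_g$, a direct case-check (whether the $C$-outgoing edge at $v$ is out-$T_b$ or out-$T_r$) shows that the in-$T_b$ edge of $v$ always lies in the counter-clockwise arc from the $C$-outgoing edge to the $C$-incoming edge at $v$. Because $R$ lies on the left of $C$, this arc is precisely the local slice of $R$ at $v$; consequently, $v$'s $T_b$-parent $w_1$ lies in $R$ (either on $C$ or strictly inside).

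I then trace backward in $T_b$: set $w_0 := v$ and $w_{i+1} := $ the $T_b$-parent of $w_i$. This yields a simple directed $T_b$-path which eventually reaches the root $r_b$ of $T_b$, an outer vertex of $G$, hence outside $R$. By planarity of $G$, an edge with one endpoint strictly inside $R$ and the other strictly outside would be forced to cross $C$ at a non-vertex point, which is impossible in a plane graph; therefore the backward sequence remains inside $R \cup C$ at least until it first lands on $C$, which must occur before the sequence reaches $r_b$. Let $k \geq 1$ be this first index; then $w_1, \dots, w_{k-1}$ lie strictly inside $R$, and $w_k \in C$ with $w_k \ne v$ by acyclicity of $T_b$.

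Splicing the directed $T_b$-path $w_k \to w_{k-1} \to \cdots \to w_1 \to v$ with the directed arc of $C$ from $v$ to $w_k$ (taken in $C$'s own direction) produces a directed cycle $C'$ in $T_r \cup T_b$. The cycle $C'$ is simple (its $T_b$-interior vertices are strictly inside $R$ while its $C$-arc portion has its interior on $C$ strictly between $v$ and $w_k$), and it bounds a sub-region strictly smaller than $R$: the complementary $C$-arc from $w_k$ back to $v$ is non-empty and must enclose at least one face, so $C'$ encloses strictly fewer faces than $C$ does. This contradicts the minimality of $C$. The main obstacle is the cyclic-order case analysis placing the in-$T_b$ edge on the correct side of $C$ (together with its mirror verification when $R$ is on the right) and the planarity argument trapping the backward $T_b$-walk inside $R \cup C$; once these are in hand, the splicing produces the shorter cycle essentially for free.
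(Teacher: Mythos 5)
The paper does not prove this proposition; it merely cites it as a version of Theorem~4.6 of Schnyder's paper, so there is no in-paper argument to compare against. Your self-contained proof is correct, and it is essentially the standard minimal-counterexample argument for Schnyder-wood acyclicity: take a directed cycle $C$ of $T_r\cup T_b$ enclosing the fewest faces, use the cyclic order at a vertex whose $C$-incoming edge lies in $T_r$ to force the incoming $T_b$-edge into the enclosed region $R$, trap the backward $T_b$-walk inside $\overline{R}$ by planarity until it first returns to $C$, and splice to obtain a strictly smaller directed cycle in $T_r\cup T_b$. I also checked that the mirror case ($R$ to the right of $C$) goes through with $T_r$ and $T_b$ swapped, as you claim: there one picks $v$ with $C$-incoming in $T_b$, and the cyclic order places the in-$T_r$ edge inside $R$.

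Two places worth tightening in a final write-up. First, you implicitly use that every vertex of $C$ is an interior vertex of $G$; this follows because in an internal $3$-orientation no interior edge points toward an exterior vertex, so exterior vertices have in-degree $0$ and cannot lie on a directed cycle. This fact is what licenses applying the interior rotation rule at each $v\in C$ and what guarantees that the root $r_b$ lies strictly outside $\overline{R}$. Second, the sentence that the backward sequence ``remains inside $R\cup C$ at least until it first lands on $C$'' is slightly circular as phrased; it is cleaner to prove by induction that if $w_i$ lies strictly inside $R$ then $w_{i+1}\in\overline{R}$, deduce that the first index $k$ with $w_k\notin R^{\circ}$ satisfies $w_k\in C$, and observe that such a $k$ exists because $r_b\notin\overline{R}$. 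With those two remarks made explicit, the argument is complete.
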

\begin{proof}[Proof of Theorem \ref{thm:G-F}]
By the monotonicity of the coloring number, it is enough to prove the theorem for triangulations.
Let $G=(V,E)$ be a triangulation and let $(T_r, T_g, T_b)$ be a realizer of $G$ (note that realizers exist by the results of Schnyder \cite{schem}).
By Proposition \ref{prop:no_cycle_in_2}, oriented graph $H=(V, T_r \cup T_g)$ is acyclic.
Moreover, by the definition of a realizer, every interior vertex has in-degree 2 in $H$ and every exterior vertex has in-degree 0.
Therefore, we can add to $H$ the edges of the outercycle of $G$ and orient them in such a way that the resulting graph $H'$ is still acyclic.
Then, the in-degrees of all vertices $H'$ are still at most 2.
This implies that $\col(H')$ is at most 3.
Note that after dropping the orientation, the edges of $H'$ are precisely the edges of $G-T_b$.
Let $F$ be the set of edges $T_b$ with dropped orientations.
The orientation of $H'$ proves that 
\[
    \col(G-F) \leq 3.
\]
\end{proof}

\section{Counterclockwise orientations and Alon-Tarsi numbers of planar graphs}
\label{sec:proof}

We recall a few more definitions and a few results from \cite{schem}.
For a realizer $(T_r,T_g,T_b)$ of a triangular graph $G$, a \emph{colored path} is a path from an interior vertex of $G$ to an exterior vertex of $G$ that has only edges from one of the sets $T_r, T_g, T_b$.
Every tree induced by sets $T_r, T_g, T_b$ contains only one exterior vertex of $G$ (consequence of Theorem 4.5 in \cite{schem}).
We call these vertices \emph{roots} and denote them by $v_r, v_g, v_b$.
For every vertex, there are 3 unique colored paths that start at that vertex \cite{schem}.

The colored paths in $T_r, T_g, T_b$ are correspondingly called the \emph{red path, green path}, and \emph{blue path}.
Colored paths that contain an interior vertex $v$ of $G$ as one of the ends are correspondingly denoted by $P_r(v), P_g(v), P_b(v)$. 
The path $P_i(v)$ ends in $v_i$ for $i \in \{r,g,b\}$.
Under the orientation given by the realizer, path $P_r(v)$ is a directed path from $v_r$ to $v$.
For an interior vertex $v$ of $G$ we see that $P_r(v), v_rv_b, P_b(v)$ forms a simple (i.e., not directed) cycle $C$. 
The subgraph of $G$ bounded by $C$ is called \emph{the green region of $v$} and is denoted by $R_g(v)$. . 
Regions of other colors are defined in an analogous way.
For interior vertices $u,v$ of $G$, Lemma 5.2 in \cite{schem} implies that
\begin{equation}
\label{eq:reg_sub}
    u \in R_g(v) \implies R_g(u) \text{ is a subgraph of } R_g(v).
\end{equation}
In other words, green regions are partially ordered by the relation of being a subgraph.

We are ready to prove the main ingredient of our argument.
\begin{prop} \label{prop:mainw}
For every triangular graph $G$ there exists an augmented orientation $(G,D,O)$
without nonempty Eulerian structures and with the maximum augmented in-degree at most 4.
\end{prop}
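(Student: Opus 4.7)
The plan is to construct the augmented orientation directly from the counterclockwise internal $3$-orientation of $G$ (guaranteed by Proposition \ref{clockwise}) together with its induced Schnyder realizer $(T_r, T_g, T_b)$. Specifically, I would orient the interior edges according to this $3$-orientation, orient the three outer-triangle edges transitively (for instance, $v_r \to v_g$, $v_g \to v_b$, and $v_r \to v_b$, so that $v_r$ becomes a source), and assign strength $2$ to every edge of the red tree $T_r$ and strength $1$ to every other edge.

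Verifying the in-degree bound is then a one-line calculation. Every interior vertex has exactly one in-edge in each of $T_r$, $T_g$, $T_b$ by the realizer definition, so its augmented in-degree is $2+1+1 = 4$; each outer vertex has augmented in-degree at most $2$, coming solely from the transitively oriented outer triangle, since no interior edge is directed toward an exterior vertex.

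The substantial part of the proof is to rule out any nonempty Eulerian structure $F$. I would begin by observing that $v_r$ is a source of the whole augmented orientation, so it cannot lie in $F$; cascading, $v_g$ (whose only in-edge came from $v_r$) and then $v_b$ are also expelled, so $F$ consists purely of interior-to-interior edges. Next, I would argue that $F$ must contain edges of all three color classes: otherwise $F$ is a subgraph of $T_i \cup T_j$ for two colors $i \neq j$, which is acyclic by Proposition \ref{prop:no_cycle_in_2} (and its symmetric analogues), and therefore admits no nonempty Eulerian subgraph. Finally, with all three colors present and red edges carrying strength $2$, a single directed cycle in $F$ is immediately ruled out, because at any color-transition vertex the incoming and outgoing strengths would be unequal, violating the balance condition.

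The main obstacle I anticipate is the case where $F$ decomposes into several directed cycles, because strength surpluses and deficits at distinct color-transition vertices could in principle cancel across cycles. Excluding such multi-cycle $F$ is where the counterclockwise hypothesis must be used in an essential and global way, most likely by translating $F$ into an oriented face-chain of the planar embedding and arguing from the uniqueness clause of Proposition \ref{clockwise} together with the Schnyder structure that any such chain must be trivial.
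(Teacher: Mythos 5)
Your setup is essentially the paper's: the counterclockwise internal $3$-orientation, the induced realizer, strength $2$ on the edges of $T_r$ and strength $1$ elsewhere, and an extension to the outer triangle that makes it acyclic (so the outer face is not a directed cycle). Your in-degree calculation is also correct. The gap is exactly where you flag it. Your single-cycle argument is sound — the Eulerian balance condition forces the cycle to be strength-monochromatic, hence either entirely in the tree $T_r$ or entirely in the acyclic digraph $T_g\cup T_b$, both impossible — but a general Eulerian structure is an arbitrary union of edge-disjoint directed cycles, and your remarks about ``face-chains'' and the uniqueness clause of Proposition~\ref{clockwise} are a speculative sketch, not an argument. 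As written, nothing in the proposal excludes an Eulerian structure whose strength surpluses and deficits cancel across vertices, and that is precisely where the real work lies.

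The paper closes this gap with a local extremal argument that handles every nonempty Eulerian structure $H$ at once, without separating into cycles. Among the vertices of $H$, choose $v$ whose green region $R_g(v)$ is minimal in the partial order given by \eqref{eq:reg_sub}. Minimality forces $H$ to avoid the interior of $R_g(v)$, so neither the red nor the blue in-edge of $v$ can lie in $H$; the only in-edge of $v$ in $H$ is green, giving augmented in-degree $1$, hence augmented out-degree $1$, which excludes red out-edges (strength $2$) and, again by minimality of $R_g(v)$, green out-edges. So any directed cycle of $H$ through $v$ enters on a green edge, leaves on a blue edge, and stays outside $R_g(v)$ — forcing it to be clockwise and contradicting the counterclockwise choice of the orientation. (The case of an exterior $v$ is easy because the outer triangle is acyclic.) This extremal choice of $v$ is the missing ingredient; it converts the global cancellation problem you anticipate into a single local contradiction.
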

\begin{proof}
From \ref{clockwise} we know that there exists an internal 3-orientation $L_G$ of $G$ in which all cycles are oriented counterclockwise.
Such an orientation 
can be extended to external edges in such a way that all directed cycles are oriented counterclockwise and the outer face is not a cycle.
Let $O$ be such an extension.
We have that the orientation $L_G$ corresponds to some realizer $(T_r$, $T_b$, $T_g)$. 
Let $D$ be an assignment of strengths to the edges of $G$ such that the edges of $T_r$ have strength $2$ and all the other edges have strength $1$. 
Then $(G,D,O)$ is an augmented orientation.

Suppose for a contradiction that there exists a nonempty Eulerian structure $H$ in $(G,D,O)$.
Consider a vertex $v$ of $H$ with a minimal green region $R$. 
We know that $H$ does not contain any other vertex or edge of $R$.

First, we consider the case where $v$ is an interior vertex of $G$.
There are three edges directed to $v$ colored red, green, and blue, respectively (colors are given by the Schynder labelling). 
We know that blue and red edges oriented towards $v$ cannot be in $H$, because $v$ is a vertex with a minimal green region (see property (\ref{eq:reg_sub})).
Then, only the green edge can enter $v$ in $H$. 
$H$ is nonempty, therefore, that edge belongs to $H$.
Then, the augmented in-degree of $v$ in $H$ is equal to 1, 
    so the augmented out-degree of $v$ in $H$ is also equal to 1.
Red edges contribute $2$ to augmented out-degrees so there cannot be any outgoing red edges from $v$ in $H$.
In $H$, there cannot be any green edges outgoing from $v$, due to the minimality of the green region of $v$.
Then the only edge of $H$ that is oriented away from $v$ must have a blue color. 
Since $H$ is a Eulerian structure, it contains a simple directed cycle $C$ that contains $v$.
We see that cycle $C$ must contain one green edge directed to $v$ in $O$ and one blue edge directed from $v$ in $O$.
Furthermore, $C$ does not contain vertices from the green region of $v$. 
Therefore, $C$ must be oriented clockwise.
But in orientation $O$ all directed cycles are oriented counterclockwise. 
This is a contradiction.

The other case, where $v$ is an outer-vertex, is simpler. 
The vertices on the outer face do not have ingoing edges from the interior vertices, so the only cycle containing them could be the outer face, but the outer face is not a cycle.
Therefore, $v$ cannot belong to a directed cycle, which contradicts the fact that every Eulerian structure contains a directed cycle.
\end{proof}

Proposition \ref{prop:mainw} enables an alternative way of proving the main result of Zhu from \cite{zhuat}.
\begin{thm}
\label{thm:planarAT}
    The Alon-Tarsi number of a planar graph is at most 5.
\end{thm}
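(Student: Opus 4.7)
The plan is to assemble the theorem from three ingredients already in hand: the monotonicity of \AT{} (Fact~\ref{fact:ATmon}), Proposition~\ref{prop:mainw} which supplies an augmented orientation with strong structural properties, and Lemma~\ref{wgd_atg_relation} which turns such an orientation into a bound on $\AT$.

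First I would reduce to the triangular case. Given an arbitrary planar graph, embed it in the plane, add edges greedily inside every non-triangular face until all interior faces are triangles, and similarly arrange for the outer face to be a triangle (adding an outer triangle if necessary without introducing new vertices, which is possible on the same vertex set by the standard planar augmentation). By Fact~\ref{fact:ATmon}, it suffices to bound the Alon-Tarsi number of the resulting triangular graph.

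Next I would apply Proposition~\ref{prop:mainw} to obtain an augmented orientation $(G,D,O)$ with maximum augmented in-degree at most $4$ and with no nonempty Eulerian structure. The key parity observation is then almost automatic: the empty edge set is always an Eulerian structure (vacuously, every vertex has augmented in-degree $0$ equal to its augmented out-degree, and there are no isolated vertices precisely because there are no vertices at all), and it has $0$ edges, hence is even. So the number of even Eulerian structures is $1$ and the number of odd Eulerian structures is $0$; in particular they differ. Feeding this into Lemma~\ref{wgd_atg_relation} yields
\[
    \AT(G) - 1 \;\leq\; \alpha(W_{G,D}) \;\leq\; 4,
\]
so $\AT(G) \leq 5$, which transfers back to the original planar graph via Fact~\ref{fact:ATmon}.

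There is no genuine obstacle to finish the theorem, since Proposition~\ref{prop:mainw} already performed the substantive combinatorial work (the Schnyder-woods argument using the counterclockwise $3$-orientation and the doubled red tree). The only subtlety worth spelling out is the one above: confirming that the empty structure is counted as an even Eulerian structure in the convention used by Lemma~\ref{wgd_atg_relation}, so that the hypothesis $CO(O)\ne CE(O)$ of that lemma is satisfied as soon as no nonempty Eulerian structure exists. Everything else is a direct citation.
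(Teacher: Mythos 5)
Your proposal is correct and follows essentially the same route as the paper: reduce to a triangulation via Fact~\ref{fact:ATmon}, invoke Proposition~\ref{prop:mainw} to get the augmented orientation, and then apply Lemma~\ref{wgd_atg_relation}. The one thing you make explicit that the paper leaves implicit --- that ``no nonempty Eulerian structure'' means exactly one (even) Eulerian structure, namely the empty one, so the hypothesis $CO(O)\ne CE(O)$ of Lemma~\ref{wgd_atg_relation} holds --- is a genuine and worthwhile clarification, not a deviation.
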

\begin{proof}
Let $G$ be a triangulation of a plane graph $F$. 
From Lemma \ref{clockwise} there exists an internal 3-orientation $O$ in which all directed cycles are oriented counterclockwise.
Then, by Proposition \ref{prop:mainw}, there exists an augmented orientation $(G,D,O)$ without nonempty Eulerian structures. 
The maximal in-degree of a vertex in $(G,D,O)$ is at most 4. 
From Lemma \ref{wgd_atg_relation} we obtain
\[
    4 \geq \alpha(W_{G,D}) \geq \alpha (f_G) = \AT(G) - 1.
\]
Therefore, $AT(F)$ is at most 5 as well.
\end{proof}

\section{Counterclockwise orientation in Thomassen's proof}
\label{sec:Thom}

The original proof of Theorem \ref{thm:planarAT} from \cite{zhuat} followed the ideas of the famous proof of Thomassen of the fact that the choice number of a planar graph is at most 5 \cite{thom5c}.
It is interesting that the list coloring algorithm that is implicitly given in Thomassen's work can be easily modified to construct a useful augmented orientation.
We describe below the modified procedure.

\subsection{Algorithm description}

The input of the procedure is a 2-connected near triangulation with two distinguished vertices $v_1,v_2$ that are clockwise consecutive on the outer cycle.
The edge between the distinguished vertices is already oriented; the other edges are not.
The procedure constructs an orientation of all the edges of the given graph.
Its behaviour depends on whether the outercycle has a chord.
The cases are described in the following paragraphs.
\bigskip

\textbf{The outer cycle has a chord -- recursive step}
    
    Let $v_av_b$ be a chord of the outercycle.
    It divides the outer cycle into two cycles $C_1$ and $C_2$,
        where $C_1$ contains both $v_1, v_2$.
    Cycles $C_1$ and $C_2$ together with their interiors, determine two subgraphs of $H$ denoted by $H_1, H_2$.
    The only common vertices of $H_1$ and $H_2$ are $v_a$ and $v_b$.
    The procedure is run recursively first in subgraph $H_1$ with vertices $v_1, v_2$, 
        and then in subgraph $H_2$ with distinguished vertices $v_b, v_a$ (note that edge $v_av_b$ has been oriented in the run of the procedure on $H_1$).
\bigskip

\textbf{There is no chord on the outer cycle -- orienting step}
    If there is no chord on the outer cycle, we focus on the vertex that immediately follows $v_2$ in clockwise order of the outercycle.
    We denote that vertex by $v_3$ and call it \emph{the central vertex} of this step.
    In this step, the procedure orients and assigns strengths to the edges of the current graph adjacent to $v_3$ according to the following rules: 
    \begin{itemize}
        \item edges of the outer cycle are oriented towards $v_3$ and are given strength 1,
        \item all the other edges are oriented away from $v_3$ and are given strength 2.
    \end{itemize}
    Finally, vertex $v_3$ is removed from the graph.
    If there are still some vertices beside $v_1,v_2$ left, the procedure is recursively called on the remaining graph with the same distinguished vertices $v_1,v_2$.

\subsection{Constructed orientation}
    The procedure being recursive has to be defined on near triangulations.
    We are interesting however in running it on triangular graphs. 
    To start the procedure, we choose two vertices of the outer triangle as distinguished vertices.
    Then, the first step of the procedure is the orienting step in which the central vertex is the third vertex of the outer triangle.
    Then, in the run of the procedure, the following invariants are kept.
    Note that in some sense they mimic Thomassen's conditions on the lengths of lists.
    \begin{enumerate}
        \item[(I1)] At the beginning of each step, no inner edge of the current near triangulation is oriented.
        \item[(I2)] At the beginning of each but the first step, every not-distinguished vertex of the current outer cycle has exactly one ingoing edge. 
                    Moreover, that edge does not lie on the outer cycle and is doubled.
       
        \item[(I3)] For every vertex $v$, after the orienting step in this vertex is removed, all the edges adjacent to $v$ will be oriented away from $v$.
    \end{enumerate}
    
    We make a few observations about the orientation constructed by the procedure.
    \begin{prop}
        The procedure run on a triangular graph constructs an internal 3-orientation.
    \end{prop}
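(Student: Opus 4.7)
The plan is to induct on the number of vertices of the current near triangulation processed at each recursive call, establishing the invariants (I1)--(I3) in parallel with the desired conclusion. Once (I2) is in place, the in-degree condition characterizing an internal 3-orientation follows from simple arithmetic.

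To verify the invariants, I would handle the two step types separately. In an orienting step, the central vertex $v_3$ has its outer cycle edges oriented toward itself (strength 1) and its other incident edges oriented away (strength 2), and then $v_3$ is removed. In the resulting near triangulation the former interior neighbors $u_2,\dots,u_{k-1}$ of $v_3$ enter the new outer cycle, each receiving exactly one new ingoing doubled edge (the one coming from $v_3$); the other outer cycle vertices retain their earlier ingoing configurations because their only change is an outgoing edge to $v_3$ that disappears with $v_3$. This yields (I2) for the next step. The new outer cycle edges between consecutive $u_i$'s were inner edges of $H$, hence unoriented by (I1), and the inner edges of $H$ that survive in the reduced graph also remain unoriented, giving (I1). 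In a chord step, $H_1$'s outer cycle consists of a portion of $H$'s outer cycle together with the chord $v_av_b$; every non-distinguished vertex on it is either a non-distinguished vertex of $H$'s outer cycle or one of $v_a,v_b$, and in either case it already carries the required ingoing doubled edge by (I2) on $H$. Inner edges of $H_1$ are a subset of inner edges of $H$, so (I1) transfers. After $H_1$'s recursion the chord $v_av_b$ has been oriented (inside some inner orienting step of the recursion), and the analogous setup for $H_2$ follows symmetrically.

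The conclusion then follows quickly. Consider an interior vertex $v$ of the original triangular graph $G$. Since $v$ does not lie on the outer triangle, it is never one of the original distinguished $v_1, v_2$, so $v$ must eventually become the central vertex of some orienting step somewhere in the recursion. By (I2), $v$ begins that step with exactly one doubled ingoing edge, and the step itself adds two strength-1 ingoing outer cycle edges, bringing $v$'s in-degree to exactly $3$. One still needs to verify that $v$ acquires no further ingoing edges in later recursive calls (where $v$ may become distinguished as a chord endpoint): a distinguished vertex stays on the outer cycle throughout its call, so it is never an interior neighbor of any central vertex in a no-chord orienting step (such an adjacency would be a chord), while chord steps orient no edges at all.

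The main obstacle I expect is the careful bookkeeping of (I2) through the recursive structure, particularly during chord steps: the shared chord endpoints $v_a,v_b$ belong to both $H_1$ and $H_2$, and one must confirm that their ingoing-edge counts evolve correctly through the interleaved runs. The observation that a distinguished vertex gains no ingoing edges in subsequent orienting steps is the key fact underlying both this bookkeeping and the final in-degree argument.
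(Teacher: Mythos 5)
Your proof follows the same structure as the paper's: invoke (I2) to get exactly one (doubled) ingoing edge at the start of $v$'s orienting step, add the two outer-cycle edges oriented toward $v$ during that step to reach in-degree $3$, and then argue that no later step can orient an edge toward $v$ — which is precisely invariant (I3). The only difference is that you sketch justifications for the invariants (I1)–(I3), which the paper states without proof; the core argument is identical.
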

    \begin{proof}
    Consider an internal vertex $v$.
    Invariant (I2) guarantees that at the beginning of the (orienting) step, when  vertex $v$ is removed from the graph, it has exactly one incoming double edge.
    During that step, exactly two edges are oriented towards $v$.
    Altogether, its in-degree becomes 3.
    By (I3), if an edge adjacent to $v$ is oriented in one of the later steps, it is always oriented away from $v$.
    Therefore, the in-degree of $v$ stays 3 until the end of the procedure.
    \end{proof}

    \begin{prop}
    \label{prop:no-cwo-triangles}
        The orientation constructed by the procedure does not contain a clockwise oriented triangle.
    \end{prop}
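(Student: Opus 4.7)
My plan has two parts. First, I would establish a monotonicity invariant: in any recursive call of the procedure with distinguished pair $(v_1, v_2)$, every sub-call that still contains $v_2$ has $v_2$ as its own second distinguished vertex. The only point where the distinguished pair could change is in the chord step, where the $H_1$-call reuses $(v_1, v_2)$, and the $H_2$-call is made with distinguished pair $(v_b, v_a)$ for the chord endpoints $v_a, v_b$ (with $v_a$ preceding $v_b$ clockwise on the parent outer cycle, which fixes the labeling). If $v_2 \in H_2$ then $v_2 \in \{v_a, v_b\}$ since these are the only vertices shared by $H_1$ and $H_2$; but $v_2 = v_b$ would force $v_a$ to be the clockwise predecessor of $v_2$ on the parent outer cycle, i.e.\ $v_a = v_1$, turning $v_a v_b$ into the outer-cycle edge $v_1 v_2$ rather than a chord, a contradiction. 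Thus $v_2 = v_a$, preserving its role, and iterating this shows that $v_2$ is the second distinguished vertex of every sub-call that contains it. In particular $v_2$ is never central within the call's recursion.

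As a consequence, any edge $v_2 y$ with $y \ne v_1$ in the graph of the call is oriented only when $y$ becomes central in a sub-call $C$. By the invariant, $v_2$ is the second distinguished vertex of $C$, so $y$ (the clockwise successor of that vertex on $C$'s outer cycle) is adjacent to $v_2$ on $C$'s outer cycle, making $v_2 y$ an outer-cycle edge of $C$. The orienting step orients it toward the central vertex $y$, giving $v_2 \to y$.

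With the invariant in hand, let $T = \{a, b, c\}$ be any triangle in $G$ and assume without loss of generality that $a$ is the first of the three vertices to become central, at its orienting step within some call with local distinguished pair $(v_1^a, v_2^a)$, so that $a$ plays the role of the local $v_3$ with clockwise outer-cycle predecessor $v_2^a$ and successor $v_4^a$. Both $ab$ and $ac$ are oriented during this step: outer-cycle edges toward $a$ and interior edges away from $a$. The absence of a chord at that step precludes $\{b, c\} = \{v_2^a, v_4^a\}$. If both $b$ and $c$ are interior, then $a$ is a source of $T$ and no directed cycle on $T$ forms. If one of them is $v_4^a$ and the other interior, a direct planar-embedding computation shows that the only possible directed cycle $a \to (\text{interior}) \to v_4^a \to a$ is counterclockwise. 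The only risky configuration is $b = v_2^a$ with $c$ an interior neighbor of $a$ adjacent to $v_2^a$ in $G$; the only potentially directed cycle $a \to c \to v_2^a \to a$ would be clockwise. But the invariant applied to this call orients $v_2^a c$ as $v_2^a \to c$, so the triangle $T$ carries edges $v_2^a \to a$, $a \to c$, and $v_2^a \to c$, which do not form a directed cycle.

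The main obstacle is the chord-case bookkeeping in the invariant: one must verify that the algorithm's labeling convention for $v_a, v_b$ places $v_2$ in the $v_a$ slot whenever $v_2 \in H_2$. Once this is in place, the ``no clockwise triangle'' conclusion reduces to the short local planar analysis around $a$'s orienting step sketched above.
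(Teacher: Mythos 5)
Your overall strategy --- fix the first vertex $a$ of the triangle to be removed, and do a local case analysis of the edges at its orienting step, using a ``$v_2$ stays the second distinguished vertex'' invariant --- is the same strategy the paper uses, and your invariant is a cleaner formalization of the paper's remark that any vertex becoming distinguished in a second recursive call was already removed in the first call.

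However, there is a real gap in the case analysis of the third paragraph. You write that ``both $ab$ and $ac$ are oriented during this step,'' and then classify $b,c$ as belonging to $\{v_2^a, v_4^a\}$ or to the interior of the \emph{current} near-triangulation. This assumes that every $G$-neighbor of $a$ in the triangle lies in the near-triangulation of the call where $a$ is removed. That can fail: $a$ may be an endpoint of a chord chosen at an earlier recursive step, in which case some of $a$'s $G$-neighbors lie strictly in the $H_2$ subgraph of that step. Since $a$ is removed inside the $H_1$-subtree (precisely because chord endpoints are processed in $H_1$ before being reused as distinguished vertices of $H_2$), the edges from $a$ into $H_2\setminus H_1$ are not yet oriented when $a$ is removed; they are oriented only later, when their other endpoints become central inside the $H_2$-subtree. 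So $b$ or $c$ could be such a vertex, and your exhaustive list of cases misses this possibility. The paper's proof handles exactly this situation in its final step (``Therefore, $v''$ must belong to a second recursive call of an earlier recursive step\ldots''), ruling it out by observing that $v'$ is removed before $v''$ and appealing to invariant (I3). Your write-up would need an analogous argument before the case split can be called exhaustive.

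Two smaller issues. First, the claim ``the absence of a chord precludes $\{b,c\}=\{v_2^a, v_4^a\}$'' overlooks the degenerate case where the current outer cycle is a triangle, so that $v_4^a = v_1^a$ and $v_2^a v_4^a$ is an outer-cycle edge rather than a chord; this configuration is possible but harmless since then $a$ is a sink of $T$. Second, the stated justification for the invariant in the chord step (``$v_2=v_b$ would force $v_a$ to be the clockwise predecessor of $v_2$ on the parent outer cycle'') does not hold as written: the chord $v_a v_b$ is not an outer-cycle edge of the parent, so $v_a$ is not the parent-cycle predecessor of $v_b$. The conclusion $v_2 = v_a$ is correct, but needs an argument that tracks orientations on $H_2$'s outer cycle (as you yourself flag in your final paragraph).
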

    \begin{proof}
    Suppose for a contradiction, that a clockwise oriented triangle has been constructed.
    
    Consider the orienting step of the procedure in which the first vertex $v$ of the triangle has been removed.
    In the final orientation, there are precisely 3 edges oriented towards $v$.
    Moreover, at the step in which $v$ was removed, exactly one of these edges was incoming from a vertex that was removed earlier.
    By the choice of $v$, this edge cannot belong to the triangle.
    
    Note also that beside the first two distinguished vertices, any vertex that becomes distinguished in the second call of a recursive step must first be removed in the first call of this step.
    This implies that no edges incoming to $v$ from a distinguished vertices of the current step can belong to the triangle.
    
    Only one edge incoming to $v$ is still a candidate for an edge of the triangle.
    It is the edge from the (clockwise) next vertex of the outer cycle $v'$.
    Moreover the current outer cycle cannot be a triangle since then $v'$ would be distinguished and hence removed before $v$.
    Let $v''$ be the third vertex of the triangle.
    Vertex $v''$ can not be an interior vertex of the current near triangulation since then the triangle would not be clockwise oriented.
    It cannot also be the unique vertex with the edge currently directed towards $v''$ since that vertex has been removed before $v$.
    Therefore, $v''$ must belong to a second recursive call of an earlier recursive step of the procedure.
    However, in such a case, vertex $v'$ is going to be removed before $v''$ which (by invariant (I3)) implies that the edge between $v'$ and $v''$ is going to be directed towards $v''$ which contradicts the fact that the triangle is oriented.
    \end{proof}

    \begin{prop}
        Edges doubled by the procedure run on a triangular graph form one of the trees of the realizer corresponding to the constructed internal 3-orientation.
    \end{prop}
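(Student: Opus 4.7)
The plan is to first pin down the structure of the doubled edges via the procedure's invariants, then identify them with a realizer tree using the uniqueness of the realizer associated to a given internal 3-orientation.

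First, I would show that every interior vertex $v$ of the input triangulation has exactly one doubled incoming edge in the final orientation. At the beginning of the orienting step at which $v$ becomes central, invariant (I2) guarantees that $v$, being a non-distinguished outer vertex, has a unique ingoing edge and that this edge is doubled. That edge was set at the orienting step of the first neighbor $u^{\ast}$ of $v$ to be removed, when $v$ was an interior neighbor of $u^{\ast}$ in the then-current near-triangulation. Invariant (I3) ensures that no later step alters it.

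Next, I would argue that the doubled edges form a spanning tree of $\{v_3\}\cup\mathrm{int}(G)$ rooted at $v_3$, where $v_3$ is the third vertex of the outer triangle (the first central vertex of the procedure). The distinguished vertices $v_1,v_2$ never become central and so emit no doubled edges; $v_3$ is the only external vertex that does. Each doubled edge points from an earlier-removed to a later-removed vertex, so the chain obtained by following incoming doubled edges backwards from any interior vertex is a sequence of strictly decreasing removal times, and the only possible terminus is $v_3$. Since there are $n-3$ doubled edges incident to $n-2$ vertices, this yields a spanning tree.

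The main obstacle is the third step: identifying this tree with one of the realizer's trees $T_r,T_g,T_b$. The preceding proposition together with Proposition \ref{prop:no-cwo-triangles} and Brehm's characterisation cited before Proposition \ref{clockwise} imply that the constructed orientation is the unique counterclockwise internal 3-orientation, so by the de Fraysseix--de Mendez bijection the realizer is uniquely determined. I plan to classify the three in-edges at every interior vertex $v$ according to their algorithmic origin --- doubled (from $u^{\ast}$), single from $v$'s clockwise predecessor at its own orienting step, and single from $v$'s clockwise successor --- and to verify that in the planar embedding their angular positions, together with those of the outgoing edges (similarly classified), realize the Schnyder cyclic pattern. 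The key geometric ingredient is that the neighbors of $v$ still present at its orienting step occupy one contiguous arc in the cyclic order around $v$, while the already-removed neighbors, $u^{\ast}$ among them, lie in the complementary arc. Tracking which earlier-removed neighbor plays the role of $u^{\ast}$ in that arc is the most delicate point of the verification; once the local Schnyder pattern is confirmed, the three algorithmic classes coincide with the three Schnyder colors, and the doubled class is the tree rooted at $v_3$.
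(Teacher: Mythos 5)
Your first two steps (a unique doubled incoming edge at every interior vertex, and the doubled edges forming a spanning tree rooted at the first central vertex) are correct and match the paper's reasoning. The divergence is in the identification step, and there your plan is heavier than necessary and leaves a gap you yourself flag as ``the most delicate point.'' You aim to classify \emph{all} in- and out-edges at every vertex by algorithmic origin and then confirm the full cyclic Schnyder pattern, which requires you to locate $u^{\ast}$ precisely inside the arc of already-removed neighbors; you do not explain how to do this, and it is not obvious. None of that is needed.

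The paper's argument is a single local observation. At the orienting step for $v$, the doubled outgoing edges are precisely the edges to the interior of the current near-triangulation, and these lie in the contiguous angular arc between the two single incoming outer-cycle edges --- the arc \emph{not} containing the doubled incoming edge. Now invoke the realizer's cyclic condition directly: between the incoming edge of color $a$ and the incoming edge of color $b$ (on the arc avoiding the incoming edge of color $c$) all outgoing edges have color $c$. Hence every doubled outgoing edge at $v$ has the same color, and that color equals the color of the doubled incoming edge. Since the doubled edges form a tree rooted at the first central vertex, this propagates and shows they are exactly one of the trees $T_r,T_g,T_b$. In short: you only need to track where the doubled edges sit in the cyclic order, not which already-removed neighbor is $u^{\ast}$, and you do not need to re-derive the full Schnyder pattern --- the realizer's defining cyclic condition already gives you everything once you know the doubled outgoing arc sits opposite the doubled incoming edge.
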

    \begin{proof}
        Discussed invariants imply that double edges always form a forest that spans all internal vertices.
        Moreover, when the procedure is run on a triangular graph, that forest is, in fact, a tree that is rooted in the unique vertex of the outer triangle that has not been distinguished in the first call.
        It remains to verify that all the doubled edges indeed belong to one of the trees of the realizer.
        It is easily verified that in the orienting step, the doubled edges are exactly all the outgoing edges between two incoming edges that are not doubled.
        That implies that they all get the same color (i.e. are going to belong to the same tree of the realizer).
        Moreover, the color is the same as the color of the unique ingoing doubled edge.
    \end{proof}

    It has been observed in \cite{brehm_orie} (Corollary 1.5.2) that whenever there is a clockwise oriented cycle in an orientation of a triangular graph, this orientation also contains a clockwise oriented triangle.
    Therefore, Proposition \ref{prop:no-cwo-triangles} implies that the orientation constructed by the procedure is in fact the counterclockwise internal 3-orientation in which the edges of one of the trees of corresponding realizers are doubled.
    As we already explained (Proposition \ref{prop:mainw} and the proof of Theorem \ref{thm:planarAT}), that orientation certifies that $\AT(G)\leq 5$.
    Interestingly, the above procedure can be also viewed as a specific realisation of the algorithm of Brehm from \cite{brehm_orie} designed for generating realizers of planar graphs.

\section{Final thoughts}
We showed that Zhu's strengthening of the result of Thomassen can be derived from the independent developments on the realizers of planar graphs started by Schnyder.
Although stemming from different lines of research, these two proofs are not really different.
We showed that a natural modification of Thomassen's procedure can be used to construct an orientation that certifies that the Alon-Tarsi number of the graph is at most 5.

A slight technical generalization of the method of Alon and Tarsi was used in our proof.
It has a property that, while it allows proving $\AT(G)\leq 5$, it does not help to find a monomial of $f_G$ that certifies that property.
At the same time, after careful strengthening of some edges, the problem of counting Eulerian structures becomes much easier.
Indeed, in some sense, all nontrivial Eulerian subgraphs of the orientations corresponding to monomials of $f_G$ certifying $\AT(G)\leq 5$ cancel themselves.
We expect that augmented orientations may also be useful in analysing graph polynomials  for other graph classes.
Let us also note that our modification of $f_G$ into $W_{G,D}$ by strengthening edges is just one of the possible options.
Polynomial $W_{G,D}$ used in our proof can be viewed as $f_G$ multiplied by another  polynomial defined by a specific spanning tree of the underlying graph.
It is tempting to look for other useful examples of such multipliers that depend on other substructures of the studied graphs.

The bound from Theorem \ref{thm:planarAT1} has also recently been given a short proof by Gu and Zhu \cite{gu2022alon}.
They managed to greatly simplify the original argument with a technique that is somewhat similar to the extension of Alon-Tarsi method used in the current paper.
As these two ideas seem to be related, it is rational to take both perspectives into account when considering potential generalizations of graph polynomials described above.
That paper also contains a simplified proof of a recent result of Grytczuk and Zhu \cite{grytczuk2020alon} that every planar graph $G$ contains a matching $M$ such that $\AT(G-M)\leq 4$.
It would also be interesting to describe this matching in terms of Schnyder realizers.

\bibliographystyle{plain}
\bibliography{ref}

\end{document}